\definecolor{lightgray}{gray}{0.9}
\definecolor{dnrbl}{rgb}{0,0,0.3}
\definecolor{dnrgr}{rgb}{0,0.3,0}
\definecolor{dnrre}{rgb}{0.5,0,0}
\newtheorem{proposition}{Proposition}
\DeclareMathOperator{\KP}{\mathrm{K}\mskip 0.5mu}
\DeclareMathOperator{\KS}{\mathrm{C}\mskip 0.5mu}
\DeclareMathOperator{\Dim}{\mathrm{Dim}\mskip 0.5mu}
\newcommand{\uhr}{\mskip -3mu \upharpoonright\mskip -3mu}
\newcommand{\cnd}{\mskip 1mu | \mskip 1mu}
\let\le=\leqslant
\let\ge=\geqslant
\begin{document}
\title{The Kučera--Gács Theorem Revisited by Levin}
\date{}
\author{George Barmpalias\thanks{State Key Lab of Computer Science, Institute of Software, Chinese Academy of Sciences, Beijing, China, \texttt{barmpalias@gmail.com} Supported by NSFC grant No. 11971501.}, Alexander Shen\thanks{LIRMM, Univ Montpellier, CNRS, Montpellier, France.  Supported by ANR-21-CE48-0023 FLITTLA grant, \hfil\break \texttt{alexander.shen@lirmm.fr}, \texttt{sasha.shen@gmail.com}, \texttt{https://www.lirmm.fr/\~{}ashen}}}
\maketitle

\begin{abstract}
Leonid Levin~\cite{levin-this} published a new (and very nice) proof of the Kučera--Gács theorem that occupies only a few lines when presented in his style. We try to explain more details and discuss the connection of this proof with image randomness theorems, making explicit some result (see Proposition~\ref{prop:image}) that is implicit in~\cite{levin-this}. Then we review the previous work about the oracle use when reducing a given sequence to another one, and its connection with algorithmic dimension theory.
\end{abstract}

\section{The Kučera--Gács Theorem}

The Kučera--Gács theorem\cite{Kucera1985, Gacs1986} says that \emph{every \textup(infinite\textup) sequence of zeros and ones is Turing-re\-ducible to some Martin-Löf random sequence}. In other words, for every sequence $\alpha$ there exist a Martin-Löf random (with respect to the uniform Bernoulli distribution) sequence $\omega$ and a computable mapping $T$ of the space of binary sequences to itself such that $T\omega=\alpha$.

Let us comment on the notions used in the statement. The notion of Martin-Löf random sequence was defined by Martin-Löf in 1966 \cite{MartinLof1966} and since then has become a standard notion in algorithmic randomness (see, e.g., \cite{LiVitanyi, DowneyHirschfeldt,Nies,suv}).

To define computable mappings, consider an oracle machine that has read access to an infinite bit sequence on the input tape and writes bits sequentially on the output tape. It computes a mapping $T$ with domain $\Omega$ (the space of all infinite binary sequences) and codomain $\Sigma$ (the space of all finite and infinite sequences). Mappings that correspond to oracle machines are called \emph{computable}.

The proofs given by  Kučera and Gács (as well as their subsequent improvements that limit the number of input bits needed to produce $n$ output bits, see below) all follow the same scheme: given $\alpha$, we construct some random sequence $\omega$ and some ad hoc mapping $T$ that maps $\omega$ to $\alpha$ but has no meaning outside this context. However, there is an alternative approach. There is an image randomness theorem (see, e.g., \cite{suv}, or \cite{BienvenuHoyrupShen2017} for more general version) that says that for a computable mapping $T$ that maps a uniform Bernoulli distribution $P$ on $\Omega$ to some \emph{probability distribution} $Q$ \emph{on $\Omega$} (this means that $T(\omega)$ is infinite for $P$-almost all $\omega$), \emph{every sequence that is random with respect to $Q=T(P)$ is a $T$-image of some $P$-random sequence}. The alternative plan is to find some extension of this result to the case where the image measure $T(P)$ is a \emph{semimeasure}, and then derive the Kučera--Gács theorem from this extended version.

Let us explain this plan and the notion of a semimeasure in more details. Consider an arbitrary computable mapping $T\colon \Omega\to\Sigma$. Applying $T$ to a random uniformly distributed point $\omega$ in $\Omega$, we get some random variable $\xi=T(\omega)$ with values in $\Sigma$. For every string $x$ we may consider the probability of the event ``$\xi$ starts with $x$''. We get some non-negative real function $M_T(x)$ defined on all strings. Obviously, $M_T(\Lambda)=1$ for the empty string $\Lambda$, and 
\[
   M_T(x)\ge M_T(x0)+M_T(x1) \eqno(*)
\]
for every string~$x$. The last inequality may not be an equality; the difference
\[
M_T(x)-M_T(x0)-M_T(x1)
\]
is the probability of the event ``$\xi$ is finite and equals $x$'' (since it corresponds to the cases when $x$ is a prefix of $\xi$ but neither $x0$ nor $x1$ are). 

The non-negative functions $M$ on strings that have value $1$ on the empty string and satisfy the inequality $(*)$, are called \emph{semimeasures} and correspond to arbitrary probability distributions on $\Sigma$; we often use the same letter for the probability distribution on $\Sigma$ and the corresponding function on strings. One may ask which semimeasures can be obtained as $M_T$ for a computable mapping $T$, as described above. This question was answered by Levin in 1970~\cite{ZvonkinLevin1970}: the function $M$ should be \emph{lower semicomputable}. This means that the set of pairs $\langle r,x\rangle$ where $x$ is a string and $r$ is a rational number smaller that $M(x)$, is (computably) enumerable.

In the same paper Levin noted that there exists a maximal (up to $O(1)$-factor) function in the class of lower semicomputable semimeasures; it is now called the \emph{continuous a~priori probability}. More precisely, there are many such functions that differ by $O(1)$-factors; we fix one of them and denote it by $\mathsf{M}(x)$. The continuous a~priori probability $\mathsf{M}(x)$ can be used to characterize randomness with respect to computable measures on~$\Omega$. Let $Q$ be a computable measure on $\Omega$, i.e., a lower semicomputable semimeasure such that all finite strings have probability zero:
\[
Q(x)=Q(x0)+Q(x1)
\]
for all~$x$. Levin proved in \cite{Levin1973} that \emph{a sequence $\alpha$ is Martin-Löf random with respect to $Q$ if and only if the ratio $\mathsf{M}(x)/Q(x)$ is bounded for all prefixes $x$ of $\alpha$}. The intuitive meaning: $\alpha$ is \emph{not} random with respect to $Q$ if some prefixes of $\alpha$ are much more likely according to the a priori distribution~$\mathsf{M}$ than according to~$Q$ (the ratio is unbounded). 

Combining this characterization of randomness with the image randomness theorem mentioned above, we come to the following result.

\begin{proposition}
Let $P$ be the uniform Bernoulli distribution on $\Omega$. Let $T\colon\Omega\to\Sigma$ be a computable mapping such that $T(\omega)$ is infinite for $P$-almost $\omega$, and let $Q=T(P)$ be the image distribution. Then the following properties of an infinite sequence $\alpha$ are equivalent:
\begin{itemize}
\item $\mathsf{M}(x)/Q(x)$ is bounded for all prefixes $x$ of $\alpha$;
\item $\alpha=T(\omega)$ for some Martin-Löf random \textup(with respect to $P$\textup) sequence $\omega$. 
\end{itemize}
\end{proposition}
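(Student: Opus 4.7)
The plan is to view the proposition as the combination of the two ingredients explicitly stated in the excerpt: the image randomness theorem and Levin's characterization of randomness with respect to a computable measure. Both ingredients apply once we know that $Q=T(P)$ is a \emph{computable measure} on $\Omega$, so the first thing I would do is verify this.

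For the first step, note that the assumption that $T(\omega)$ is infinite for $P$-almost all $\omega$ means that the image distribution $Q=T(P)$ assigns zero probability to every finite string, i.e.\ $Q(x)=Q(x0)+Q(x1)$ for every $x$. Hence $Q$ is a (proper) probability measure on $\Omega$, not merely a semimeasure. On the other hand, $Q(x)=\Pr_P\{\omega:\,T(\omega)\text{ extends }x\}$ is lower semicomputable by the same Levin argument recalled in the excerpt (the set of $\omega$ whose output starts with $x$ is an effectively open set, whose $P$-measure can be approximated from below). A lower semicomputable semimeasure that happens to be a measure is computable: upper bounds on $Q(x)$ are obtained from $Q(\Lambda)=1$ and lower bounds on the $Q$-values of the other strings of length $|x|$, using $\sum_{|y|=|x|}Q(y)=1$.

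For the second step, since $Q$ is a computable measure, Levin's criterion (cited above) applies and tells us that the first bullet is exactly the statement that $\alpha$ is Martin-L\"of random with respect to $Q$. For the third step, invoke the image randomness theorem recalled just before the proposition: for a computable $T$ sending $P$ to a genuine probability distribution $Q$ on $\Omega$, a sequence $\alpha$ is $Q$-random if and only if $\alpha=T(\omega)$ for some $P$-random $\omega$. (The ``if'' direction is the easy conservation-of-randomness direction: a $P$-random $\omega$ cannot be mapped by a computable $T$ into a $Q$-null effectively open set. The ``only if'' direction is the substantive content of the image randomness theorem.) This equates the second bullet with $Q$-randomness of $\alpha$, so the two bullets are equivalent.

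The only genuinely load-bearing step is the reduction to a computable image measure; everything else is a direct citation of results already recalled in the paper. The point that deserves care is therefore the first one, namely checking that the assumption ``$T(\omega)$ infinite $P$-a.s.'' is exactly what promotes the lower semicomputable semimeasure $M_T$ to a computable measure, which is what both auxiliary theorems require.
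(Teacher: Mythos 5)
Your proof is correct and takes essentially the same route as the paper, which presents the proposition as the immediate combination of Levin's $\mathsf{M}/Q$-criterion for randomness with respect to a computable measure and the image randomness theorem. Your extra step of verifying that the hypothesis ``$T(\omega)$ infinite $P$-a.s.'' promotes the lower semicomputable semimeasure $Q=T(P)$ to a computable measure (so both cited results apply) is a helpful elaboration of what the paper leaves implicit in its one-line derivation.
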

If we could get rid of the restriction that $T(\omega)$ is infinite for almost all $\omega$, thus extending this proposition to the case when $Q$ is a semimeasure, the Kučera--Gács theorem will follow from this generalization. Indeed, the semimeasure $\mathsf{M}$ (being semicomputable) can be represented as $T(P)$ where $T$ is some computable mapping of $\Omega$ to $\Sigma$. Then $\mathsf{M}(x)/Q(x)=1$ is bounded everywhere, and this (hypothetical) generalization would imply that every sequence $\alpha$ is $T(\omega)$ for some random $\omega$.

\section{Levin's approach}

However, this plan does not work: as found in~\cite{BHPS2013}, this generalized statement is not true. Levin~\cite{levin-this} found a way to overcome these difficulties by using a less general statement. Namely, he made the following three observations\footnote{We first provide them in a simplified form that is enough for the Kučera--Gács theorem; see the next section for discussion.} (Propositions~\ref{prop:rounding}--\ref{prop:image}):

\begin{proposition}\label{prop:rounding}
There exists a maximal \textup(up to $O(1)$-factor\textup) lower semicomputable semimeasure whose values are finite binary fractions having at most $2n+O(1)$ bits for strings of length $n$.
\end{proposition}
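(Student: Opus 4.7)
The plan is to start from an arbitrary maximal lower semicomputable semimeasure $\mathsf{M}$ and to truncate each value $\mathsf{M}(x)$ down to a multiple of $2^{-(2n+c)}$ (with $n=|x|$ and $c$ a constant fixed below), then to subtract a purely length-dependent correction in order to restore the semimeasure inequality. Concretely, set
\[
k_x := \bigl\lfloor \mathsf{M}(x)\cdot 2^{2n+c}\bigr\rfloor, \qquad \mathsf{M}'(x):=(k_x-b_n)\cdot 2^{-(2n+c)},
\]
where the integers $b_n$ will be chosen to absorb the rounding errors. Every value $\mathsf{M}'(x)$ is then manifestly a dyadic fraction with at most $2n+c$ binary digits after the point.

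The semimeasure inequality $\mathsf{M}'(x)\ge\mathsf{M}'(x0)+\mathsf{M}'(x1)$, after multiplying by $2^{2n+2+c}$, becomes $k_{x0}+k_{x1}\le 4k_x-4b_n+2b_{n+1}$. The floors provide $k_{x0}+k_{x1}\le 4\mathsf{M}(x)\cdot 2^{2n+c}\le 4k_x+4$, so it is enough to take the $b_n$ with $b_{n+1}\ge 2b_n+2$; the choice $b_0:=0$, $b_n:=2^{n+1}-2$ works and leaves $\mathsf{M}'(\Lambda)=1$. Non-negativity of $k_x-b_n$ and the domination $\mathsf{M}'(x)\ge\mathsf{M}(x)/2$ both follow from the fact that $\mathsf{M}$, being maximal, dominates the uniform measure: $\mathsf{M}(x)\ge\alpha\cdot 2^{-n}$ for some $\alpha>0$. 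This gives $k_x\ge\alpha\cdot 2^{n+c}-1$, which comfortably exceeds $b_n=2^{n+1}-2$ as soon as $\alpha\cdot 2^c\ge 2$, and at the same time bounds the truncation loss $\mathsf{M}(x)-\mathsf{M}'(x)$ by $O(2^{-n-c})\le\mathsf{M}(x)/2$ for $c$ large enough. Lower semicomputability of $\mathsf{M}'$ is automatic, since the floor of a lower semicomputable function at a computable dyadic scale is lower semicomputable and $b_n$, $2^{-(2n+c)}$ are computable.

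The main obstacle is balancing the two competing demands on $b_n$: the recurrence $b_{n+1}\ge 2b_n+2$ forces $b_n$ to grow at least like $2^n$ in order to cancel the two floor errors that appear at level $n+1$, while non-negativity of $k_x-b_n$ requires $b_n$ not to exceed the raw count $k_x$, which under the uniform lower bound on $\mathsf{M}(x)$ is of order $\alpha\cdot 2^{n+c}$. These two requirements are exactly on the same scale when the precision is $2n+O(1)$ bits: the two $2^n$-terms then cancel and the situation is governed by the single additive constant $c$. With a finer scale (say $n+O(1)$ bits) the required correction would still have to grow geometrically but the raw counts $k_x$ would stay bounded in $n$, and the construction would collapse; this is essentially why $2n+O(1)$ is the right precision to state.
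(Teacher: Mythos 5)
Your construction is correct in spirit but takes a genuinely different route from the paper's, and it has one real (though fixable) flaw.

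\textbf{Different mechanism.} The paper restores the semimeasure inequality by an \emph{additive} safety margin applied \emph{before} rounding: it forms $(\mathsf{M}(x) + 2^{-2|x|})/2$ and then rounds this down. The safety margin of the auxiliary semimeasure $x\mapsto 2^{-2|x|}$ is $2^{-2n-1}$, which exceeds the rounding loss $2^{-2n-c}$, so the rounded function is automatically a semimeasure and trivially non-negative. You instead round $\mathsf{M}$ first and \emph{subtract} a length-dependent term $b_n\cdot 2^{-(2n+c)}$ afterward, which forces you to argue separately that $k_x - b_n \ge 0$, and for this you must invoke the (true but extra) fact that $\mathsf{M}$ dominates the uniform measure, $\mathsf{M}(x)\ge\alpha 2^{-|x|}$. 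Both proofs land at $2n+O(1)$ bits, but the paper's additive version is more robust: it extends directly to precision $n+d(n)$ bits whenever $\sum_n 2^{-d(n)}\le 1$ (as noted in the Remarks), whereas your recurrence $b_{n+1}\ge 2b_n+2$ forces $b_n$ to grow like $2^n$, which only matches the raw counts $k_x$ at the specific scale $2n+O(1)$ --- your own closing paragraph correctly diagnoses this limitation.

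\textbf{The gap.} The claim that the floor $k_x=\lfloor \mathsf{M}(x)\cdot 2^{2n+c}\rfloor$ is a lower semicomputable function of $x$ is false in general. From a lower enumeration of $\mathsf{M}(x)$ one can semidecide $\mathsf{M}(x) > q$ for rational $q$, but not $\mathsf{M}(x)\ge q$; so if $\mathsf{M}(x)\cdot 2^{2n+c}$ equals an integer $m$ exactly, one can never commit to $k_x\ge m$. This is exactly why the paper insists on taking ``the maximal $(2n+c)$-bit binary fraction that is \emph{strictly} smaller than this value,'' i.e.\ the largest integer $<\mathsf{M}(x)\cdot 2^{2n+c}$; that version \emph{is} lower semicomputable. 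Replacing your floor by this strict version changes $k_x$ by at most $1$ and leaves the rest of your bookkeeping intact (the bound $k_{x0}+k_{x1}\le 4k_x+4$, the recurrence, and the domination estimate all survive with the same constants), so the fix is local. Also, with the strict version $\mathsf{M}'(\Lambda)$ comes out slightly below $1$, so you must set it to $1$ by hand --- as the paper explicitly remarks.
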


\begin{proposition}\label{prop:semimeasure}
Every semimeasure $Q$ whose values on $n$-bit strings are finite binary fractions of length at most $2n+O(1)$, is an image of the uniform distribution $P$ by a computable mapping $T\colon \Omega\to\Sigma$ that uses only the first $2n+O(1)$ bits of input to produce $n$ bits of output.
\end{proposition}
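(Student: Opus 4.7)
The plan is to realize $Q$ as the image distribution $T(P)$ by assigning, to every binary string $x$ of length $n$, a clopen set $U_x\subseteq\Omega$ which is a finite union of cylinders of depth exactly $2n+c$ (with $c$ the $O(1)$ constant from the statement) and satisfies $P(U_x)=Q(x)$, $U_\Lambda=\Omega$, and $U_{x0},U_{x1}\subseteq U_x$ with $U_{x0}\cap U_{x1}=\emptyset$. Given such a family, define $T(\omega)$ as the unique finite or infinite sequence $\alpha\in\Sigma$ whose set of prefixes equals $\{y:\omega\in U_y\}$; the nesting and disjointness of the $U_y$ guarantee that this set is prefix-closed and linearly ordered, so $\alpha$ is well-defined. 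Then $T(P)(x)=P(U_x)=Q(x)$, hence $T(P)=Q$; moreover, since $U_x$ depends only on the first $2n+c$ bits of its argument, an algorithm computing the first $n$ bits of $T(\omega)$ reads only the first $2n+c$ bits of $\omega$.

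The construction of the family $\{U_x\}$ is incremental. First I would fix a non-decreasing sequence $Q_0\le Q_1\le\cdots$ of semimeasures converging to $Q$ in which every value $Q_s(y)$ is a dyadic fraction of denominator $2^{2|y|+c}$. This is arranged in the usual way: enumerate lower-semicomputable approximations to $Q$, and before committing any atomic increment at a string $x$ propagate it upward through all ancestors, so that the semimeasure inequality holds at every stage. At stage $s$ maintain clopen sets $U_x^s$ with $P(U_x^s)=Q_s(x)$ and the three properties above. Whenever $Q_s(x)$ jumps by some dyadic $\delta$, write $x=x'b$ and add $\delta\cdot 2^{2n+c}$ cylinders of depth $2n+c$ to $U_x^s$, chosen from the free region of the parent, $U_{x'}^s\setminus(U_x^s\cup U_{x'\bar b}^s)$. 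Set $U_x=\bigcup_s U_x^s$.

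The main step to check is that this free region always contains enough cylinders of the required depth. Each cylinder of $U_{x'}^s$ has depth $2n+c-2$ and subdivides into four cylinders of depth $2n+c$, so the parent contains $2^{2n+c}Q_s(x')$ such cylinders; subtracting those already allocated to the children $U_x^s$ and $U_{x'\bar b}^s$ leaves exactly $2^{2n+c}\bigl(Q_s(x')-Q_s(x)-Q_s(x'\bar b)\bigr)$ free cylinders, a non-negative integer by the semimeasure property of $Q_s$. The precision hypothesis enters precisely here: the common denominator $2^{2n+c}$ simultaneously expresses the measures $Q_s(y)$ for $|y|=n$ as integers and counts the depth-$(2n+c)$ cylinders available inside the parent, so the allocation always closes. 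The whole schedule is computable, hence so is $T$ (given $\omega$ as oracle, enumerate stages until the first $2n+c$ bits of $\omega$ pin it inside some $U_y^s$ with $|y|=n$, and output that $y$), and the proposition follows.
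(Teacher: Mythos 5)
Your proposal is correct and follows essentially the same route as the paper: both realize $Q$ by the standard space-allocation construction, assigning to each length-$n$ string a union of cones of depth $2n+c$ with total measure $Q(x)$, and using the dyadic-precision hypothesis (together with the monotonicity of $2n+c$ in $n$) to make every allocation exact inside the parent's region. The paper merely sketches this and points to Zvonkin--Levin and~\cite{suv} for the bookkeeping that you spell out explicitly.
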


The claim means that the $n$ first bits of $T(\omega)$ depend only on $2n+O(1)$ bits of $\omega$. In particular, if $T(\omega)$ has length less than $n$, then the same is true for all $T(\omega')$ if $\omega'$ and $\omega$ have the same first $2n$ bits.

\begin{proposition}\label{prop:image}
Let $T\colon\Omega\to\Sigma$ be a computable mapping that uses only the first $2n+O(1)$ input bits to produce $n$ output bits. Let $P$ be the uniform Bernoulli distribution on~$\Omega$ and let $Q=T(P)$ be the image semimeasure. If the ratio $\mathsf{M}(x)/Q(x)$ is bounded for all prefixes of some infinite sequence $\alpha$, then $\alpha=T(\omega)$ for some Martin-Löf random \textup(with respect to $P$\textup) sequence $\omega$.
\end{proposition}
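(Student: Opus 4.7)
The plan is to argue by contradiction. Suppose the ratio $\mathsf{M}(\alpha_n)/Q(\alpha_n)$ is bounded along the prefixes of $\alpha$, yet no Martin-L\"of random $\omega$ satisfies $T(\omega)\succeq\alpha$. I will manufacture, out of the universal Martin-L\"of test, a lower semicomputable semimeasure $R$ whose ratio with $Q$ grows without bound along $\alpha$; the maximality of $\mathsf{M}$ will then deliver the desired contradiction.

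The first step — and the one I expect to be the main obstacle — is a compactness argument that relies crucially on the bit-usage bound of Proposition~\ref{prop:semimeasure}. Let $U_n:=\{\omega: T(\omega)\succeq\alpha_n\}$. Because $T$ reads only the first $2n+O(1)$ input bits before producing the first $n$ output bits, $U_n$ is a \emph{finite} union of cylinders of length $2n+O(1)$, and is therefore clopen and compact. The hypothetical absence of a random preimage of $\alpha$ means $\bigcap_n U_n \subseteq \bigcap_k V_k$ for the universal Martin-L\"of test $V_1,V_2,\ldots$ (with $P(V_k)\le 2^{-k}$). Since the $U_n$ are decreasing compact sets and each $V_k$ is open, the closed sets $U_n\setminus V_k$ form a decreasing family with empty intersection, and the finite intersection property supplies, for each $k$, some index $n_k$ with $U_{n_k}\subseteq V_k$. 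Without the bit-usage bound $U_n$ would only be effectively open, and this finite-$n_k$ conclusion would collapse — which is precisely the obstacle that Proposition~\ref{prop:semimeasure} is designed to remove.

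With the $n_k$'s in hand, the rest of the proof is formal. I will set
\[
R_k(x) := 2^k\, P\bigl(T^{-1}[x]\cap V_k\bigr), \qquad R(x) := \sum_{k\ge 1} 2^{-k} R_k(x),
\]
where $T^{-1}[x]=\{\omega: T(\omega)\succeq x\}$. Each $R_k$ is lower semicomputable as the $P$-measure of an effectively open set depending on $x$; it satisfies $R_k(\Lambda)=2^k P(V_k)\le 1$, and the disjointness of $T^{-1}[x0]$ and $T^{-1}[x1]$ inside $T^{-1}[x]$ gives $R_k(x)\ge R_k(x0)+R_k(x1)$, so $R$ is a lower semicomputable semimeasure. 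A short computation then yields
\[
\frac{R(\alpha_n)}{Q(\alpha_n)} = \sum_k P\bigl(V_k \mid T^{-1}[\alpha_n]\bigr),
\]
and whenever $n\ge n_k$ the $k$-th term equals $1$ because $U_n\subseteq U_{n_k}\subseteq V_k$. Thus $R(\alpha_n)/Q(\alpha_n) \ge \#\{k : n_k\le n\}\to\infty$, and the maximality of $\mathsf{M}$ gives $\mathsf{M}(\alpha_n)/Q(\alpha_n)\to\infty$, contradicting the hypothesis.
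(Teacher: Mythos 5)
Your proof is correct, and it takes a genuinely different route from the one Levin gives (and the paper follows). Both arguments hinge on the same structural fact — that $U_n:=T^{-1}(\Omega_{\alpha_n})$ is \emph{clopen} thanks to the bit-usage bound — and both exploit compactness of $\Omega$. But they differ in the tool used to certify randomness and in the overall direction of the argument. The paper's proof is direct: it takes the universal \emph{expectation-bounded} test $\mathsf{t}$, pushes the measure $\tau=\mathsf{t}\,dP$ forward through $T$ to get a semimeasure $M'\le c\mathsf{M}$, deduces from $\mathsf{M}\le c'Q$ along $\alpha$ that $\tau(U_n)\le cc'\,P(U_n)$, hence finds $\omega_n\in U_n$ with $\mathsf{t}(\omega_n)\le cc'$, and extracts a limit point $\omega$ using that the level set $\{\mathsf{t}\le cc'\}$ is closed and the $U_n$ are clopen. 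Your proof runs by contradiction: assuming no random preimage exists, you use the universal Martin-L\"of test $\{V_k\}$, the compactness of $U_n$, and the inclusion $\bigcap_n U_n\subseteq\bigcap_k V_k$ to produce, for each $k$, a finite $n_k$ with $U_{n_k}\subseteq V_k$; then the semimeasure $R=\sum_k 2^{-k}R_k$ with $R_k(x)=2^k P(T^{-1}[x]\cap V_k)$ is lower semicomputable (the needed checks — $R_k(\Lambda)\le1$, subadditivity from disjointness of $T^{-1}[x0]$ and $T^{-1}[x1]$ — all go through), and your conditional-probability identity shows $R(\alpha_n)/Q(\alpha_n)\ge\#\{k:n_k\le n\}\to\infty$, contradicting boundedness of $\mathsf{M}/Q$ by maximality. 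What the two routes buy: Levin's is shorter and produces $\omega$ explicitly as a limit point, but it requires familiarity with expectation-bounded (integral) tests and their exact-expression semimeasure; yours is indirect but more elementary and self-contained, using only the standard $\Sigma^0_1$ definition of Martin-L\"of randomness and no machinery beyond the universal semimeasure. One small slip of reference: you attribute the clopenness of $U_n$ to Proposition~\ref{prop:semimeasure}, but in the statement of Proposition~\ref{prop:image} this is already a hypothesis on $T$; Proposition~\ref{prop:semimeasure} is what guarantees such a $T$ exists for the relevant $Q$, which is a separate step in assembling the G\'{a}cs--Ku\v{c}era theorem.
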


\begin{proof}[Proof of Proposition~\ref{prop:rounding}.]
We may perform rounding and replace each value on some $n$-bit string by the maximal $(2n+c)$-bit binary fraction that is strictly smaller than this value. Here $c$ is some constant that will be chosen later. This procedure gives a lower semicomputable function (here it is important that we use strict inequalities) with required granularity, but this function may not be a semimeasure. The property
\[
Q(x) \ge Q(x0)+Q(x1)
\]
may be violated after rounding. Indeed, while the right-hand side may only decrease after rounding (and this is not a problem),  the left-hand side also may decrease making the inequality false. The change in the left-hand side is at most $2^{-2n-c}$ (where $n=|x|$), so a small safety margin is enough: if 
\[
Q(x)\ge Q(x0)+Q(x1)+2^{-2n-c},
\]
then after rounding we have a semimeasure. (We should also change the value on the empty string to~$1$, but this change does not violate the other requirements.)

To guarantee this safety margin, we add to the maximal semimeasure $\mathsf{M}$ some other semimeasure with desired safety margin. (Then we have to divide the sum by $2$ to get a semimeasure, but this only increases $c$ by $1$.) For example, the semimeasure $Q(x)=2^{-2|x|}$ has safety margin $2^{-2n}-2\cdot 2^{-2n-2}=2^{-2n-1}$ for $n$-bit strings, so for $c\ge 1$ this is enough.

We need also to check that our semimeasure remains maximal after rounding. Indeed, the safety margin is bigger than the granularity, and the combined effect of the increase and rounding can only increase the value of the semimeasure.
\end{proof}

\begin{proof}[Proof of Proposition~\ref{prop:semimeasure}.]
Here we have to recall the construction of the mapping for a given semimeasure (see \cite{ZvonkinLevin1970} or \cite[Section 5.1]{suv} for details).  This construction is performed in terms of space allocation. Assume that some semimeasure $Q$ is given. For every string $x$ of length~$n$, we allocate to~$x$ some subset of $\Omega$ that is a union of cones (a \emph{cone} $\Omega_y$ is a set of all infinite sequences with a given finite prefix~$y$) and has total measure $Q(x)$. If $Q(x)$ is a multiple of $2^{-2n-c}$, we may use cones $\Omega_y$ of size $2^{-2n-c}$ (with $|y|=2n+c$) when allocating space to $x$. Since $2n+c$ is a monotone function of $n$, for the children of $x$ we may use smaller cones inside the cones allocated to~$x$. In this way we get a mapping where $n$ output bits are determined by $2n+c$ input bits: the preimage of $x$ is the union of cones allocated to $x$, and all these cones have size~$2^{-2n-c}$, so only the first $2n+c$ bits matter.\footnote{A technical clarification: this argument assumes that the approximations to $Q(x)$ are also multiples of $2^{-2n-c}$. This can be achieved by rounding the approximations to the closest binary fraction with $2n+c$ bits; one could also note that the construction provided by Proposition~\ref{prop:rounding} already has this property.} 
\end{proof}

\begin{proof}[Proof of Proposition~\ref{prop:image}.]

This proposition can be proven in different ways; for example, one can adapt the argument from~\cite[Section 5.9.3]{suv}. Levin gives a much simpler argument, but it uses the characterization of Martin-Löf randomness in terms of \emph{expectation-bounded randomness tests} that were introduced by Levin in~\cite{Levin1976} and then studied in~\cite{Gacs1979} (and probably are not as well known as they deserve to be). We recall their definition and properties that we need (see~\cite[Section~3.5]{suv} and \cite{BGHRS2011} for a detailed exposition).

We consider lower semicomputable non-negative real functions on $\Omega$ (the infinite value $+\infty$ is also allowed). The notion of lower semicomputability is an effective version of lower semicontinuity: function $f$ is \emph{lower semicomputable} if the set of pairs $\{\langle r,\omega\rangle: r<f(\omega)\}$ (where $r$ is a rational number and $\omega\in \Omega$) is an effectively open set in $\mathbb{Q}\times\Omega$ (a union of an enumerable family of basic open sets of the form $(p,q)\times \Omega_u$, where $p<q$ are rational numbers and $u$ is a binary string).

The sum of two lower semicomputable functions is lower semicomputable; the same is true for an (effectively given) series of semicomputable functions. A \emph{average-bounded randomness test} is a lower semicomputable function whose integral (in other words, average, or expected value) over $\Omega$ (with uniform measure~$P$) is finite. Taking a mix of all average-bounded randomness tests, we get a \emph{maximal}, or \emph{universal} average-bounded randomness test. It is defined up to $O(1)$-factors; we denote it by $\mathsf{t}$ and may assume that $\mathbf{E}_P\mathsf{t}<1$ (the expectation is less than~$1$). Now Martin-Löf random sequences can be equivalently defined as sequences $\omega\in\Omega$ such that $\mathsf{t}(\omega)<\infty$.

Every non-negative integrable function on $\Omega$ (with finite integral) is a density function of some measure on $\Omega$, so we construct a measure $\tau$ and let $\tau(X)=\int_X \mathsf{t}(\omega)\,dP(\omega)$. The measure $\tau(\Omega)$ of the entire space~$\Omega$ is $\int_\Omega\mathsf{t}<1$. We can transform this measure into a lower semicomputable semimeasure by artificially declaring $\mathsf{t}(\Lambda)=1$ for the corresponding function on strings.\footnote{This semimeasure has a natural interpretation. Namely,  the exact expression (see~\cite{Gacs1979,BGHRS2011}) for $\mathsf{t}$ says that it corresponds (up to a $O(1)$-factor) to the following random process: we first generate a finite string according to discrete a priori probability and then start adding random bits. But this is not important for the argument.} 

Now we are ready to prove Proposition~\ref{prop:image}. In addition to $Q=T(P)$ we consider the image measure $M'=T(\tau)$. It is easy to check that this measure also can be extended to a lower semicomputable semimeasure, and therefore it is does not exceed $c\mathsf{M}$ for some constant~$c$.

Recall our assumption: the ratio $\mathsf{M}(x)/Q(x)$ is bounded for all prefixes $x$ of~$\alpha$. Let $c'$ be this bound, so $M(x)\le c'Q(x)$ for all prefixes $x$ of~$\alpha$. Combining this with the previous inequality, we conclude that $M'=T(\tau) \le c\mathsf{M}\le cc'Q$. Now let us look more closely at the values of $M'(x)$ and $Q(x)$ for some $x$ that is a prefix of~$\alpha$. Since $M'=T(\tau)$ and $Q=T(P)$, both values are (different) measures of the set $U=T^{-1}(\Omega_x)$, the set of all sequences that are mapped by $T$ into some extensions of $x$. The first is $\tau(U)=\int_U \mathsf{t}(\omega)\,dP(\omega)$, the second is just $P(U)$.  The inequality $M'<cc'Q$ implies that there exists a sequence $\omega\in U$ (in other words, $T(\omega)$ starts with $x$) such that $\mathsf{t}(\omega)\le cc'$. We denote this sequence by $\omega_x$ indicating that this sequence depends on $x$ (while constants $c$ and $c'$ do not depend on~$x$).

Taking longer and longer prefixes $x$ of $\alpha$, we get an infinite sequence of corresponding $\omega_x$. We know that $\mathsf{t}(\omega_x)\le cc'$ for all $\omega_x$, and that $T(\omega_x)$ starts with $x$. Let $x_i$ be the prefix $\alpha$ that has length $i$, and let $\omega_i=\omega_{x_i}$ be the corresponding sequence. Using compactness, we take a limit point $\omega$ of the sequence $\omega_i$. Let us show that $\omega$ is a sequence that we need: a random sequence such that $T(\omega)=\alpha$. Indeed, $\omega$ is random since the set of all sequences~$\xi$ such that $\mathsf{t}(\xi)\le cc'$ is closed (since $\mathsf{t}$ is lower semicomputable and therefore lower semicontinuous). And for every prefix $x$ the corresponding set $U=T^{-1}(\Omega_x)$ is not only open (as it will be for any computable mapping) but also closed due to our assumption ($n$ output bits are determined by $2n+O(1)$ input bits), and contains all $\omega_i$ starting from $i=|x|$, and therefore all limit points of the sequence $\omega_0,\omega_1$,\ldots. Proposition~\ref{prop:image} is proven.
\end{proof}

\section{Remarks about Levin's argument}

\subsubsection*{Stronger versions} 

We formulated Propositions~\ref{prop:rounding}--\ref{prop:image} in the simplest form that is enough for the Kučera--Gács theorem. Levin's formulations are stronger. 

\emph{Proposition}~\ref{prop:rounding}.
Here we may easily replace $2n$ by $n+d(n)$ where $d(n)$ is a computable  function such that $\sum 2^{-d(n)}\le 1$. For that we use the safety margin $2^{-n-d(n)}$ for strings of length $n$, and the total additional weight is $\sum 2^{-d(n)}$, since we have $2^n$ strings of length $n$ with weight $2^{-n-d(n)}$ each. Moreover, Levin notes that we may let the granularity depend not only on the length of $x$, but on $x$ itself, using $\KP(x)+O(1)$ bound:
\begin{quote}
\textit{%
   There exists a constant $c$ and a maximal \textup(up to $O(1)$ factor\textup) lower semicomputable semimeasure whose value at every string $x$ is a finite binary fraction having at most $\KP(x)+c$ bits.
}
\end{quote} 
To prove this, we use $\mathsf{m}(x)=2^{-\KP(x)}$ (where $\KP(x)$ is the prefix complexity of $x$) to provide a safety margin for string~$x$. Consider a semimeasure $S(x)=\sum_{x\preceq y} \mathsf{m}(y)$, where the sum is taken over all strings $y$ that have $x$ as a prefix. As usual, we let $S(\Lambda)=1$ to get a semimeasure. The semimeasure~$S$ corresponds to the following process: generate a finite string according to $\mathsf{m}$, and stop (so $S$ is concentrated on finite sequences).

Now consider the average $M'(x)=[\mathsf{M}(x)+S(x)]/2$, where $\mathsf{M}$ is a maximal lower semicomputable semimeasure. It is maximal up to an $O(1)$ factor (since $\mathsf{M}$ is maximal). The safety margin $M'(x)-M'(x0)-M'(x1)$ for rounding is at least $\textsf{m}(x)/2$ (thanks to $S$). Let us then round $M'(x)$ up to $\KP(x)+c$ bits, for some large enough constant $c$: let $\widehat{\mathsf{M}}(x)$ be the maximal integer multiple of $2^{-\KP(x)-c}$ that is strictly smaller than $M'(x)$ (and $\widehat{\mathsf{M}}(\Lambda)=1$, as usual).

For large enough $c$ the safety margin $\mathsf{m}(x)/2$ exceeds the rounding errors, so $\widehat{\mathsf{M}}$ is a semimeasure. However, some care is needed to show that $\widehat{\mathsf{M}}$ is lower semicomputable. The problem is that we cannot compute the value of $\KP(x)$, we have only the approximations from above for $\KP(x)$ that can decrease at any time. To construct the approximations from below to $ \widehat{\mathsf{M}}$, we consider the current approximation $k$ for $\KP(x)$ and take the maximal multiple of $2^{-k-c}$ that is strictly less than the current approximation from below to $M'(x)$. At some point $k$ reaches the final value, i.e., $\KP(x)$, so the sequence converges to the maximal multiple of $2^{\KP(x)-c}$ that is strictly smaller than $M'(x)$. However, we need the sequence of approximations to be increasing, and now this is not obvious, since $k$ can decrease, and bigger granularity may lead to smaller approximations. To achieve the monotonicity, we synchronize the increases in the approximations for $\mathsf{m}(x)$ and the corresponding increases in the approximations for $M'(x)$ with decreases in $k$. If $c$ is large enough, the increase in $M'$ compensates the effects of increased granularity, and this finishes the argument.

For \emph{Proposition}~\ref{prop:semimeasure} we may replace $2n+O(1)$ by arbitrary total computable non-de\-creas\-ing function. The computability is needed to make the construction effective, and the function should be non-decreasing since we need to allocate intervals for $x0$ and $x1$ inside the intervals already allocated for $x$. (It seems that this requirement is not mentioned in~\cite[last paragraph of Section~7.1]{levin-this}, but it is not clear how we can proceed without it.) We may also use a total computable function $t(x)$, depending on the \emph{string~$x$}, not only on its \emph{length}, if  $t(x)\le t(x0)$ and $t(x)\le t(x1)$ for all strings~$x$. Moreover, we may consider also partial computable functions $t$.

\begin{quote}
\emph{%
Let $t$ be a computable partial function on strings, with natural values. Assume that the domain of $t$ is a subtree \textup(if $t(x)$ is defined, then $t(y)$ is defined for all prefixes $y\preceq x$\textup), and inequalities $t(x)\le t(x0)$ and $t(x)\le t(x1)$ hold when both sides are defined. Let $Q$ be a lower semicomputable semimeasure. Assume that $Q(x)$ is an integer multiple of $2^{-t(x)}$ for all strings $x$ in the domain of $t$. Then there exist a computable mapping $T\colon \Omega\to\Sigma$ such that for every $x$ where $t(x)$ is defined, the following properties hold:
\begin{itemize}
\item the probability of the event ``$T(\omega)$ starts with $x$'' equals $Q(x)$;
\item the set of all $\omega$ such that $T(\omega)$ starts with $x$, is a union of intervals of size $2^{-t(x)}$.
\end{itemize} 
}%
\end{quote}
 Proving this stronger version of Proposition~\ref{prop:semimeasure}, we may again assume that the growing approximations for $Q(x)$ are multiples of $2^{-t(x)}$ for strings~$x$ where $t(x)$ is defined (the other values of $Q$ are not used). Then we inductively construct the $T$-preimage of $\Omega_x$ as a union of intervals of size $2^{-t(x)}$, choosing those intervals inside the intervals for the parent of $x$. Note that $t(x0)$ and $t(x1)$ may be different, and we may have to allocate space for $x0$ (intervals of size $2^{-t(x0)}$) knowing only the values of $t(x)$ and $t(x0)$ but not $t(x1)$, or vice versa. For that we start filling an interval of size $2^{-t(x)}$ by intervals of size $2^{-t(x0)}$ from left to right, and by intervals of size $2^{-t(x1)}$ from right to left; when there is not enough space for the next interval in some direction, we continue in the same direction in the other interval of size $2^{-t(x)}$ (and use the remaining space in the first interval, if there is some, for the other direction).

 \emph{Proposition}~\ref{prop:image} can be generalized in the following way (and the proof remains essentially unchanged):
\begin{quote}
\emph{%
Let $t$ be a computable partial function on strings whose domain is a subtree. Let $T$ be a computable mapping $\Omega\to\Sigma$, and for every $x$ in the domain of $t$ the event ``$T(\omega)$ starts with $x$'' is a union of intervals of size $2^{-t(x)}$. Let $P$ be the uniform distribution on $\Omega$ and let $Q=T(P)$ be the image semimeasure. If all prefixes of some infinite sequence $\alpha$ are in the domain of $t$ and the ratio $\mathsf{M(x)}/Q(x)$ is bounded for them, then $\alpha=T(\omega)$ for some Martin-L\"of random \textup(with respect to $P$\textup) sequence~$\omega$.
}%
\end{quote} 

Combining these observations, we get an improved version of the Kučera--Gács theorem:
\begin{quote}
\emph{%
 If $t(x)$ is a partial computable function that is an upper bound for $\KP(x)$ \textup(where defined\textup) such that the domain of $t$ is a subtree and $t(x)\le t(x0)$ and $t(x)\le t(x1)$ if both sides are defined, then for every infinite sequence~$\alpha$ whose prefixes belong to the domain of $t$ there exists a Martin-Löf random sequence $\omega$ and a machine $M$ that computes $\alpha$ given oracle $\omega$ and uses at most $t(x)+O(1)$ bits of $\omega$ when producing a prefix $x$ of $\alpha$. 
}%
\end{quote}
The last statement means that machine $M$ produces the same prefix $x$ when applied to every infinite sequence that has the same $t(x)$ first bits as the sequence~$\omega$.

Levin states this results for total $t$, but we give its natural generalization to compare
Levin's argument with some other line of research discussed in the next sections: the monotonic compression.

\subsubsection*{Partially continuous transformations} 

This notion (used by Levin in~\cite{levin-this}) is a generalization of the notion of an oracle machine, and a notion of a  (monotone) computable operation defined by him in~\cite{Levin1973}. Informally speaking, an oracle machine uses the bits from the oracle tape to produce output bits: the more it knows about the input, the more output bits are produced. If we want to consider finite and infinite inputs, we arrive to the notion of a (monotone) computable operation. Now Levin makes the next generalization step: a partially continuous transformation provides more information about the output if given more information about the input. Such a transformation is an effectively closed set in $\Omega\times\Omega$ and can be defined by a computable sequence of statements of the following form: ``if the input sequence belongs to this cone, then the output sequence cannot belong to that cone''. In this way we can do more than just specify output bits sequentially: transformation may say, for example, that $7$th bit is zero or that $5$th and $16$th bits are equal without claiming anything about other bits. 

The notion of partially continuous transformation is natural in itself, but is not strictly necessary for the argument: we may consider only oracle machines (as usual) and semimeasures defined on finite strings (and not on arbitrary clopen subsets of $\Omega$).

\section{Oracle use and compression}

One may ask what could be the finite version of  the Kučera--Gács theorem. The natural candidate says that every (finite) string $x$ can be computed by a simple program $f$ from a random string~$y$. This is true for obvious reasons: let $y$ be the shortest description of $x$ (so the length of~$y$ equals the Kolmogorov complexity of $x$), and let the simple program $f$ be the decompressor used in the definition of Kolmogorov complexity. One can check easily that the shortest description is incompressible, so we get the desired result.

Can this approach be used for the Kučera--Gács theorem? Here we have some infinite sequence $\alpha$. We may find the shortest descriptions for all the prefixes of $\alpha$, but these shortest descriptions may be unrelated to each other and are not necessarily prefixes of one sequence. We need some other type of compression where the compressed version is a sequence $\omega$ and its prefixes are used by the decompressor to produce prefixes of $\alpha$. The natural definition of this type was suggested by Kobayashi~\cite{Kobayashi1981}. Consider a sequence $\alpha$ that is computed by an oracle machine that has access to oracle $\omega$. For every $n$, we consider the prefix of $\omega$ that is used while computing the first $n$ bits of $\alpha$. (We may assume that $\alpha$ is printed bit by bit on the  write-only one-directional output tape while $\omega$ is provided on a read-only one-directional input tape, so you cannot read some bit of $\omega$ without reading all the previous bits.) The length of the prefix of the input tape (as a function of the length of the prefix of the output tape) is called the \emph{oracle use}\footnote{Technically speaking, this notion of oracle use is a bit more restrictive than before, when we required that $n$ output bits depend only on some number of oracle bits:  it may happen that machine reads more bits than it is actually necessary to determine the output bits. For example, the machine can copy its input to its output and read $2n$ input bits before producing $n$ output bits. Moreover, one can construct a machine whose oracle use in the previous sense is smaller than the oracle use (in the new sense) for \emph{every machine computing the same mapping $\Omega\to\Sigma$}. Indeed, let $x_0,x_1,x_2,\ldots$ be a computable prefix-free sequence (say, $x_n=0^n1$), and let $P$ and $Q$ be two inseparable enumerable sets of natural numbers. Consider the mapping $T\colon\Omega\to\Sigma$ that maps all sequences with prefix $x_i0$ to $x_i0$ for every $i$; if $i\in P$, all sequences with prefix $x_i1$ are also mapped to $x_i0$; if $i\in Q$, all sequences with prefix $x_i1$ are mapped to $x_i1$.  If $i\in P$, the prefix $x_i$ is enough to guarantee the output $x_i0$; if $i\in Q$, one needs to read the bit after $x_i$ to produce the correct output. Assume that some machine $M$ computes $T$; then, if it does not read the bit after $x_i$ for $i\in P$,  we can separate $P$ and $Q$ by looking whether $M$ reads more bits after $x_i$ or not. \par  However, Levin's arguments (and the results mentioned below) can be adapted to this stronger notion in the same way as in~\cite[Theorem 50, p.86]{suv}.} when computing $\alpha$ with oracle~$\omega$. For example, for an oracle machine that copies input $\omega$ to output $\alpha=\omega$ bit by bit, the oracle use is the identity function $n\mapsto n$. For a machine that remembers $c$ first bits of $\alpha$ in its program and copies the rest from $\omega$, the oracle use is $n\mapsto \max(0,n-c)$.  A similar type of compression was considered by Ryabko in~\cite{Ryabko1984} and \cite{Ryabko1986}. 

It would be nice to prove the Kučera--Gács theorem in the following way: for a given (arbitrary)~$\alpha$ we find some $\omega$ that decompresses into $\alpha$ with minimal oracle use, and then derive from this minimality that $\omega$ is random. However, no argument of this type is currently known. What can be done is different: (1)~knowing the upper bounds for the complexity of the prefixes of~$\alpha$, we can construct a sequence $\omega$ that computes $\alpha$ with bounded oracle use, and then (2)~reduce $\omega$ to some random $\omega'$ not increasing the oracle use too much. The composition of these two reductions is an oracle machine that computes $\alpha$ from a random $\omega'$ with bounded oracle use (thus coming to the same result in a different way). 

Let us describe what was done in these two directions, starting with the second one. (Then we will discuss arguments that directly construct a reduction to a random sequence, thus avoiding this two-step procedure.)

\subsubsection*{Original proofs of Gács and Kučera and their optimal implementations}

Gács (1986) in \cite{Gacs1986} and  Kučera (1985) in \cite{Kucera1985} showed that every binary sequence can be computed using a random sequence as an oracle, with computable oracle use.
The method in \cite{Gacs1986} is more sophisticated, and gives the bound  $n+O(\sqrt{n} \log n)$ which is explicitly stated, even in the abstract.\footnote{Gács \cite{Gacs1986} gives $n+3\sqrt{n} \log n$, which can be refined to $n+\sqrt{n} \log n$, as shown in \cite{Barmpalias-Lewis-Pye2020}.}
What is important here, as soon will be clear, is that the overhead $\sqrt{n} \log n$ is $o(n)$ (small compared to $n$). The argument in \cite{Gacs1986} can be described as \emph{block-coding}, where the source~$\alpha$ is broken into blocks (the $i$th block has length $i$) and each block is coded by the corresponding block of a random~$\omega$. In contrast to that,
\cite{Kucera1985} did not break $\alpha$ into blocks, and \emph{coded each bit  separately} (by a block of a random sequence). As a consequence, \cite{Kucera1985}  gives significantly larger oracle use. Kučera was not interested in precise or optimal oracle-bounds and, although $n^2$ is often quoted as the oracle use in this method, Barmpalias and Lewis-Pye~\cite{Barmpalias-Lewis-Pye2020} demonstrated that $n\log n$ is the actual oracle use in  an optimal implementation of Kučera's construction. It was also demonstrated in \cite{Barmpalias-Lewis-Pye2020}, using the analysis of Merkle and Mihailovi\'{c} (2004) from~\cite{Merkle-Mihailovic2004}, that the bound $n+\sqrt{n} \log n$ in \cite{Gacs1986}  cannot be improved using the same method,  by changing the block lengths or other parameters of the construction.

Note that the oracle use in the coding discussed above is \emph{oblivious}: it does not depend on the source being coded (a sequence that we reduce to a random one). Note also that the we have $n+o(n)$ oracle use; this will be important for the next results. (This bound can be improved, see below; Levin's argument discussed above can also be used to get $n+2\log n+O(1)$ bound or even better ones.)

\subsubsection*{How much can we compress a given sequence?}

Now we concentrate on the other side of the question: given a sequence $\alpha$, we want to find some other sequence $\omega$ (random or not) and an oracle machine that computes $\alpha$ with oracle $\omega$, with minimal oracle use. What can we achieve, and what are the obstacles for the high compression?

If the $n$-bit prefix of $\alpha$ (denoted by $\alpha \uhr n$) is computed by a machine with oracle $\omega$, and the oracle use is $u_n$, then the Kolmogorov complexity of this prefix exceeds the complexity of $\omega\uhr u_n$ at most by $O(\log n)$, since $\alpha\uhr n$ can be computed from $n$ and $\omega\uhr u_n$:
\[
\KP(\alpha \uhr n)\le \KP(\omega\uhr u_n)+O(\log n) \le u_n+O(\log n+\log u_n).
\]
Therefore, the oracle use has asymptotic lower bounds:
\[
\liminf \frac{u_n}{n}\  \ge\  \liminf \frac{\KP(\alpha\uhr n)}{n}, \qquad
\limsup \frac{u_n}{n}\  \ge \ \limsup \frac{\KP(\alpha\uhr n)}{n}.
\]
The quantities in the right hand sides of these inequalities are known in algorithmic dimension theory (see \cite{Lutz2000,Mayordomo2002,Lutz2003,Athreya2007}) as \emph{effective Hausdorff dimension} $\dim(\alpha)$ (the first, one, with $\liminf$) and \emph{effective packing dimension}  $\Dim(\alpha)$ (the second one, with $\limsup$) of the sequence $\alpha$. Here we used prefix complexity in the definitions, but one can also use plain complexity $\KS(\alpha\uhr n)$, since plain and prefix complexities differ at most by $O(\log n)$ for $n$-bit strings.

These bounds are tight. For the first bound it was essentially shown by Ryabko in~\cite{Ryabko1984,Ryabko1986}; his reasoning goes as follows. Assume that $\liminf \frac{\KP(\alpha\upharpoonright n)}{n}<d$ for some $d$. Then the sequence $\alpha$ has prefixes of lengths $n_1<n_2<n_3\ldots$ such that $\KP(\alpha\uhr n_i)< dn_i$ for all~$i$. We may assume without loss of generality that $n_i$ grow fast, so $n_1+n_2+\ldots+n_i=o(n_{i+1})$. Then we can concatenate the optimal prefix free descriptions of $\alpha\uhr n_1$, $\alpha\uhr n_2$, \dots\ into an infinite sequence $\omega$. Using $\omega$ as oracle, we compute all $u\uhr n_i$ using at most $d(n_1+\ldots+n_i)=dn_i+o(n_i)$ bits. This implies that $\liminf \frac{u_n}{n}<d$ for the oracle usage $u_n$ in this oracle computation.

It is important that we are use Hausdorff dimension and $\liminf$; this argument does not give any bound for packing dimensions since we have no way to compute prefixes of $\alpha$ that have intermediate lengths (between $n_i$ and $n_{i+1}$) with bounded oracle use. The bound for packing dimension requires a bit more complicated argument that was suggested by Doty~\cite{Doty2006,Doty2007,Doty2008}. This argument goes as follows.

Instead of using fast increasing lengths, let us split $\alpha$ into blocks $\alpha=a_1a_2a_3\ldots$ in such a way that $a_i$ has length $i$. These blocks are ``not too small and not too large'' in the following sense: (1)~every prefix of $\alpha$ of length $n$ can be extended to some bigger prefix $a_1a_2\ldots a_k$ of length at most $n+O(\sqrt{n})$ (``blocks are not too large''), and, at the same time, $k=O(\sqrt{n})$ (``blocks are not too small''). Using the Kolmogorov--Levin formula for complexity of pairs, we note that
\[
\KP(a_1a_2\ldots a_k) = \KP(a_1)+\KP(a_2\cnd a_1)+\ldots \KP(a_k\cnd a_1\ldots a_{k-1}) + O(k\log |a_1\ldots a_k|).
\]
Here $\KP(u\cnd v)$ stands for (prefix) conditional complexity of $u$ given $v$; the logarithmic errors in $k$ applications of the Kolmogorov--Levin formula are accumulated in the $O(k\log |a_1\ldots a_k|)$ term.

Consider now the shortest prefix-free description $w_i$ for $a_i$ given $a_1\ldots a_{i-1}$, and a sequence $\omega=w_1w_2w_3\ldots$. It can be decoded from left to right, and we need 
\[
|w_1|+\ldots+|w_k|=
\KP(a_1)+\ldots+\KP(a_k\cnd a_1\ldots a_{k-1})=\KP(a_1\ldots a_k)+o(|a_1\ldots a_k|)
\]
bits to decode $a_1\ldots a_k$. This inequality works only on the block boundaries; for arbitrary prefix of $\alpha$ we need to restore the partially used block completely, but (recall that blocks are not too large) this is again $o(n)$ overhead. 

Combining these observations with the Kučera--Gács theorem (with $n+o(n)$ bound for the oracle use), we get the following result from~\cite{Doty2006}: for every sequence $\alpha$ there exists a Martin-L\"of random sequence $\omega$ and an oracle machine $M$ that transforms $\omega$ to $\alpha$ with oracle use $u_n$ such that
\[
\liminf \frac{u_n}{n}\ = \ \liminf \frac{\KP(\alpha\uhr n)}{n}, \qquad
\limsup \frac{u_n}{n}\  = \ \limsup \frac{\KP(\alpha\uhr n)}{n}.
\]
Note that the machine $M$ can be fixed in advance (before $\alpha$ is given): consider the universal oracle machine that first reads some self-delimited description of the other machine and then simulates this machine on the rest of the oracle tape.

\begin{table}
\begin{center}
\begingroup\setlength{\fboxsep}{7pt}
\colorbox{lightgray}{%
  \begin{tabular}{ll}
\toprule
\textbf{Coding method} & \hspace{0.3cm} \textbf{Oracle use }  \\[0.5ex]
\midrule
Kučera (1985) \cite{Kucera1985}&\hspace{0.3cm} $n\log n$\\[1.5ex]
Gács (1986) \cite{Gacs1986}&\hspace{0.3cm} $n+\sqrt{n} \log n$    \\[2ex]
Ryabko (1986)~\cite{Ryabko1986} and Doty (2008)~\cite{Doty2008}&\hspace{0.3cm} in $\big(n\cdot \dim (\alpha), n\cdot \Dim(\alpha)\big)$  \\[2ex]
Barmpalias and Lewis-Pye (2016)~\cite{Barmpalias-Lewis-Pye2018}&\hspace{0.3cm} $n+g(n)$ \ for any fast-growing $g\le_T\mathbf{0}$ \\[2ex]
Barmpalias, Lewis-Pye, Fang (2016)~\cite{Barmpalias-Lewis-Fang2016}& \hspace{0.3cm}any fast-growing $g\le_T\mathbf{0}$, if $\alpha$ is c.e. \\[2ex]
Barmpalias and Downey (2017)~\cite{Barmpalias-Downey2017}& \hspace{0.3cm}\textrm{$\min_{i\ge n} \KP(\alpha\uhr i)$\  \ if $\alpha$ is left-c.e.}   \\[2ex]
Barmpalias and Lewis-Pye (2019) \cite[I.4]{Barmpalias-Lewis-Pye2019}&\hspace{0.3cm} any computable upper bound of $\KP(\alpha\uhr n)$  \\[2ex]
Barmpalias and Lewis-Pye (2019) \cite[I.5]{Barmpalias-Lewis-Pye2019}& \hspace{0.3cm}$\min_{i\ge n} [\KP(\alpha\uhr i)+\log i]$   \\[2ex]
 \bottomrule
 \end{tabular}
}\endgroup
\caption{Oracle use for different coding methods. (Here $g\le_T\mathbf{0}$ means that $g$ is computable; $\alpha$ is considered as a set of integers when we assume that $\alpha$ is computably enumerable (c.e.), and as a real when we assume that $\alpha$ is left computably enumerable.) In all the constructions, except for Ryabko's and Doty's ones, a reduction to random oracle is directly provided; in these two constructions an additional layer is needed.}
\label{propofas}
\end{center}
\end{table}

\subsubsection*{Lower bounds on the oracle use}
After analyzing the optimal oracle use via the original methods of Kučera and Gács , one may wonder about lower bounds: the amount of the oracle use that is necessary in computations from random sequences. Downey and Hirschfeldt (2010)  showed  \cite[Theorem 9.13.2]{DowneyHirschfeldt} that constant overhead is not sufficient: there exist sequences that are not computable from any random sequence with oracle use $n+\textrm{O}(1)$. Barmpalias, Lewis-Pye, Teutsch (2016) obtained better lower bounds in~\cite{Barmpalias-Lewis-Teutsch2016}  (and these bounds turned out to be tight, see below).

To formulate their results, let us introduce the following definition: A non-decreasing $g$ is \emph{fast-growing} if  $\sum_{i} 2^{-g(i)}<\infty$, and \emph{slow-growing} otherwise.
Typical examples of slow-growing and fast-growing functions are $n\mapsto\log n$ and $n\mapsto 2\log n$, respectively.

It was shown in \cite{Barmpalias-Lewis-Teutsch2016} that for each slow-growing computable $g$, there exists a sequence $\alpha$ that is not computable (from any random oracle) with oracle use $n+g(n)$.

This result seemed quite weak at that time; for example, it does not prevent reductions to a random sequence with overhead as low as $2\log n$, in sharp contrast  to $\sqrt{n} \log n$ which was known to be the best possible upper bound for the methods used in \cite{Gacs1986,Kucera1985}. Surprisingly, this lower bound turned out to be tight.

\subsubsection*{Optimal oblivious oracle use} 

Barmpalias and Lewis-Pye (2016)~\cite{Barmpalias-Lewis-Pye2018} matched the lower-bounds of \cite{Barmpalias-Lewis-Teutsch2016}  by showing that, for each computable fast-growing $g$, every sequence $\alpha$ is reducible to some random one with oracle use $n+g(n)$. So in the oblivious case, a complete characterization of the possible oracle uses in computations from randoms was given in \cite{Barmpalias-Lewis-Pye2018}. Note that the same result can now be obtained in a different way using Levin's argument, as we have discussed.

It is interesting to note that the same characterization applies to the case of computation of left-c.e.\ reals by random left c.e.\ reals: this was already established in \cite{Barmpalias-Lewis-Fang2016} and motivated reaching the optimal overheads of \cite{Barmpalias-Lewis-Pye2018}, although they were significantly smaller than known ones at the time.\footnote{We will come back to this while discussing  the optimal adaptive coding into random sequences.}

As an algorithm, the coding in \cite{Barmpalias-Lewis-Pye2018} is greedy and, in a sense, straightforward. The correctness proof, however, is more sophisticated  than in the previous arguments due to its global character. Without getting into specifics, we can demonstrate the difference between \cite{Barmpalias-Lewis-Pye2018} and  the original argument from~\cite{Gacs1986,Kucera1985} by exposing a key injectivity property of the code-map that exists in the older methods (making their verification easier) and is missing in \cite{Barmpalias-Lewis-Pye2018}.

\subsubsection*{Differences in oblivious coding methods} 

The proofs of the Kučera--Gács theorem (both in~\cite{Gacs1986,Kucera1985} and~\cite{Barmpalias-Lewis-Pye2018}) construct some oracle machine $\Phi$ such that for every $\alpha$ there exists some random $\omega$ such that $\Phi(\omega)=\alpha$. Moreover, this random $\omega$ has bounded randomness deficiency: we fix an effectively open set $V$ of small measure that contains all non-random sequences (e.g., some level of the Martin-L\"of's randomness test) and ensure that for every $\alpha$ there is some $\omega\notin V$ such that $\Phi(\omega)=\alpha$.

All these proofs (in \cite{Gacs1986,Kucera1985} as well as in \cite{Barmpalias-Lewis-Pye2018}) have the following \emph{injectivity property}: for each $\alpha$ there exists a unique $\omega\notin V$ such that $\Phi(\omega)=\alpha$. If $u_n$ is the oracle use in this proof, the machine $\Phi$ uses only the first $u_n$ bits of $\omega$ when computing the first $n$ bits of $\alpha$. So we get some mapping of $u_n$-bit strings to $n$-bit strings. For the construction from \cite{Kucera1985} this mapping on finite strings is also injective (\emph{local injectivity}): if $\alpha'$ has the same first $n$ bit as $\alpha$, and $\omega$ and $\omega'$ are oracle sequences that let $\Phi$ compute $\alpha$ and $\alpha$, then $\omega$ and $\omega'$ share the same first $u_n$ bits. The construction in~\cite{Kucera1985} can be vaguely described as follows: we embed sparsely the full binary tree (where $\alpha$ is one of the branches) into a full binary tree, mapping nodes of height $n$ to nodes of height $u_n$ from which they could be decoded. When the effectively open set $V$ shadows some part of the embedded tree, we restore this part inside the space that is still available; this is possible since the embedding is sparse. The sequence $\omega$ is then the image of $\alpha$ under the limit embedding.

The construction in~\cite{Gacs1986} improves the oracle use by grouping bits of $\alpha$ in blocks ($n$th block has length $n$). This replaces the binary tree for $\alpha$ by a tree when nodes of height $n$ have $2^n$ children; this tree is then sparsely embedded in the full binary tree. This construction uses space more efficiently; the local injectivity holds for $n$ that are the block boundaries. 

The construction in \cite{Barmpalias-Lewis-Pye2018} is quite different and very far from being locally injective: a single source-string $\alpha\uhr n$ may have unpredictably many random code-strings, all of which are necessary for making the induced map on the infinite sequences surjective: different $\omega\uhr u_n$ may be needed for different $\alpha$ with the same $\alpha\uhr n$. One could conjecture that this is necessary to get oracle use $u_n=n+O(\log n)$. In the words of Kučera\footnote{Said during the workshop {\em Algorithmic Randomness Interacts with Analysis and Ergodic Theory} at the Banff International Research Station for Mathematical Innovation and Discovery (BIRS), of Casa Matemática Oaxaca, December 2016.}, this type of coding  is {\em global} in contrast to  \cite{Gacs1986}, \cite{Kucera1985} which is {\em local}.

Let us note that Levin's construction is not even globally injective.

\subsubsection*{Toward the optimal adaptive oracle use.} 
A natural conjecture is that \emph{for each $\alpha$ there exists a random $\omega$ which computes $\alpha$ with oracle use at most $K(\alpha \uhr n)$.} Note that $K(\alpha\uhr n)$ does not have the form of an oracle use function as it is not monotone, but we could consider instead the monotone function $\min_{i\ge n} K(\alpha\uhr i)$. (We may also try to compare the oracle use with monotone or a priori complexity; both are monotone.)

As we have seen, this question also makes sense without the requirement for $\omega$ to be random.  (Then we add a second layer that uses the oblivious encoding considered above, as we have discussed.) Note that this bound is much stronger than the results of \cite{Ryabko1986,Doty2008}, where all the bounds are up to $o(n)$.

So why one would make such a strong guess? Here we bring up the analogy with the case of computing left-c.e.\ reals from random one, as we did in the case of the oblivious oracle use:\footnote{This is only a heuristic argument; there is no deeper reason why the bounds for the left-c.e.\ reals would match the general case.} Barmpalias and Downey (2017)~\cite{Barmpalias-Downey2017} showed that every left-c.e.\ real $\alpha$ is computable by Chaitin's $\Omega$ (and any other random left-c.e.\ real) with oracle use $\min_{i\ge n} K(\alpha\uhr i)$.

This conjecture remains unproven; two partial results in this direction were obtained by Barmpalias and Lewis-Pye (2019) in \cite{Barmpalias-Lewis-Pye2019}. The first says that if $K^{\ast}$ is a partial computable function that is an upper bound for $\KP$ (at the points where $K^{\ast}$ is defined), then \emph{every sequence $\alpha$ such that $K^{\ast}$ is defined on all prefixes of $\alpha$, is computable from some random sequence with oracle use $\min_{i\ge n} K^{\ast}(\alpha \uhr i)$.} The argument used in~\cite{Barmpalias-Lewis-Pye2019} is based on some version of Kraft--Chaitin lemma that extends it to hierarchical space allocation requests. It directly constructs a random sequence $\omega$ that computes $\alpha$ with bounded oracle use (avoiding the two-step procedure described earlier and the corresponding overhead).

Note that $K^{\ast}$ has to be computable, while $\KP$ is only upper semicomputable (and also non-monotonic). This presented certain issues in the argument of \cite{Barmpalias-Lewis-Pye2019}, if we want to get an upper bound for oracle use in terms of $\KP$. They were only resolved by adding a logarithmic overhead: it is shown in~\cite{Barmpalias-Lewis-Pye2019} that \emph{every sequence $\alpha$ is computable from some random sequence with oracle use  $\min_{i\ge n} [\KP(x\uhr i) +\log i]$.} 

Both results come quite close to the conjecture formulated above, but the question whether the conjecture is true, remains open.

\begin{table}[h]\begin{center}
\begingroup\setlength{\fboxsep}{7pt}
\colorbox{lightgray}{%
  \begin{tabular}{lcc}
\toprule
\textbf{Coding} & \textbf{Type}& \hspace{0.3cm} \textbf{Optimality} \\[0.5ex]
\midrule
Kučera (1985)~\cite{Kucera1985} & oblivious &\hspace{0.3cm} No\\[1.5ex]
Gács (1986)~\cite{Gacs1986}& oblivious&\hspace{0.3cm} optimal rate   \\[2ex]
Doty (2008)~\cite{Doty2008} & adaptive&\hspace{0.3cm} optimal rate  \\[2ex]
Barmpalias and Lewis-Pye (2016)~\cite{Barmpalias-Lewis-Pye2018}& oblivious&\hspace{0.3cm} optimal \\[2ex]
Barmpalias, Lewis-Pye, Fang (2016)~\cite{Barmpalias-Lewis-Fang2016}& oblivious& \hspace{0.3cm}optimal for c.e. \\[2ex]
Barmpalias and Downey (2017)~\cite{Barmpalias-Downey2017}& adaptive& \hspace{0.3cm} optimal for left c.e. \\[2ex]
Barmpalias and Lewis-Pye (2019)~\cite{Barmpalias-Lewis-Pye2019}& adaptive& \hspace{0.3cm} ``almost optimal''  \\[2ex]
\bottomrule
\end{tabular}
}\endgroup
\caption{Oracle use for coding by random sequences. The words ``almost optimal'' mean that there are two results that show that the bound is almost tight, with $O(\log n)$ overhead or with computable upper bounds, but the conjecture mentioned above is not proven.}
\label{propofass}
\end{center}
\end{table}

\subsubsection*{Comparison}

We gather the results mentioned above in Tables~\ref{propofas} and \ref{propofass}. We see that (for the case of oblivious coding) the optimal oblivious overhead can be obtained by replacing a local injective coding used in~\cite{Kucera1985,Gacs1986} by some non-local scheme. It seems that non-locality may be necessary even for getting $O(\log n)$ overhead.

For the case of adaptive coding, the arguments from~\cite{Ryabko1986,Doty2008} also give a large overhead ($o(n)$, but not $O(\log n)$, in the case of Doty's argument), that is then combined with additional layer of oblivious coding. This is enough to get the optimal \emph{rate}. Still the arguments from~\cite{Barmpalias-Lewis-Pye2019} give much smaller overhead. The difference between the methods in \cite{Barmpalias-Lewis-Pye2019} and \cite{Doty2008} is striking in the case of logarithmic complexity of prefixes when the overhead $O(\sqrt{n}\log n)$ is large compared to the complexity of prefixes. If, for example, $\KP(\alpha\uhr n)\le 3\log n$ for all $n$, the new construction gives $3\log n$ oracle use, while the previous ones gave more than $\sqrt{n}\log n$.

Let us note also the Levin's construction also gives the result from~\cite{Barmpalias-Lewis-Pye2019} for the special case when $K^*$ is a monotone computable upper bound for $\KP$. (We may always assume without loss of generality that the domain of $K^*$ is a tree.) It would be interesting to bridge the remaining gap by any of these techniques, or to modify Levin's argument to get the best currently known bounds.

\end{document}